\newtheorem{theorem}{Theorem}[section]
\newtheorem{prop}[theorem]{Proposition}
\newtheorem{cor}[theorem]{Corollary}
\newtheorem{defn}{Definition}[section]
\def \R{\mathbb{R}}
\def \N{\mathbb{N}}
\def \Om{\Omega}
\def \Lap{\triangle}
\def \grad{\nabla}
\numberwithin{equation}{section}
\begin{document}

\title{A Note on the Transport Method for Hybrid Inverse Problems}

\author[Chung]{Francis J. Chung}
\address{Department of Mathematics, University of Kentucky, Lexington, KY, USA}
\email{fj.chung@uky.edu}

\author[Hoskins]{Jeremy G. Hoskins}
\address{Department of Mathematics, Yale University, New Haven, CT, USA}
\email{jeremy.hoskins@yale.edu}

\author[Schotland]{John C. Schotland}
\address{Department of Mathematics and Department of Physics, University of Michigan, Ann Arbor, MI, USA}
\email{schotland@umich.edu}

\begin{abstract}
There are several hybrid inverse problems for equations of the form $\grad \cdot D \grad u - \sigma u = 0$ in which we want to obtain the coefficients $D$ and $\sigma$ on a domain $\Omega$ when the solutions $u$ are known. One approach is to use two solutions $u_1$ and $u_2$ to obtain a transport equation for the coefficient $D$, and then solve this equation inward from the boundary along the integral curves of a vector field $X$ defined by $u_1$ and $u_2$.  It follows from an argument given by Bal and Ren in \cite{BalRen2} that for any nontrivial choices of $u_1$ and $u_2$, this method suffices to recover the coefficients on a dense set in $\Om$. This short note presents an alternate proof of the same result from a dynamical systems point of view.  
\end{abstract}

\maketitle

\section{Introduction}

Suppose $\Omega$ is a smooth bounded domain in $\R^n$, and $f \in C^{\infty}(\partial \Omega)$. Let $D$ be a uniformly positive function on $\Omega$, and $\sigma$ be a nonnegative function on $\Omega$, and consider the problem
\begin{equation}\label{CauchyProblem}
\begin{split}
\grad \cdot D \grad u - \sigma u   &=  0 \mbox{ on } \Omega\\
                   u|_{\partial \Omega} &= f. \\
\end{split}
\end{equation}
For these notes we will consider $D \in C^1(\Omega)$ and $\sigma \in C(\Omega)$.

In several hybrid inverse problems involving equations of this type, we can take advantage of physical phenomena to recover the solution $u$ to \eqref{CauchyProblem} for a given boundary condition $f$, without a priori knowledge of $D$ and $\sigma$ \cite{BalRen, BalRen2, BalSch, ChuHosSch, MclZhaMan}. To complete these problems, we need a method of recovering $D$ and $\sigma$ from the solutions $u$.

One approach \cite{BalRen2, BalUhl2, RenGaoZha} is to note that the equation in \eqref{CauchyProblem} can be written out as 
\begin{equation}\label{Transport}
D \Lap u + \grad D \cdot \grad u - \sigma u = 0.
\end{equation}
If $u$ is known, this can be viewed as a transport equation for $D$ with coefficients determined by $u$. Indeed, if we have two known solutions $u_1$ and $u_2$ to \eqref{Transport}, we can multiply the equation for $u_1$ by $u_2$ and vice versa, and subtract the two to obtain 
\begin{equation}\label{PureTransport}
D \left(u_2\Lap u_1 - u_1\Lap u_2\right) + \grad D \cdot \left(u_2\grad u_1 - u_1\grad u_2\right) = 0.
\end{equation}
This eliminates $\sigma$ to provide a transport equation for $D$ with known coefficients. Assuming we can measure $D|_{\partial \Om}$, then it follows from the basic theory of transport equations (\cite{Eva}, Ch. 3) that we can solve \eqref{PureTransport} to obtain $D$ on all of the integral curves of the vector field 
\begin{equation}\label{XDef}
X := u_2\grad u_1 - u_1\grad u_2
\end{equation}
that intersect the boundary of $\Om$.  Once $D(x)$ is known, we can solve for $\sigma(x)$ using \eqref{Transport}. Note that the maximum principle implies that if $u$ is positive on the boundary, then $u$ must be positive inside the domain, eliminating the possibility of difficulties if $u(x) = 0$.

The major potential problem with this transport method is the possibility that not every point in $\Omega$ can be reached from the boundary by following an integral curve of $X$.  In \cite{BalUhl2}, the authors use the existence of complex geometrical optics (CGO) solutions to \eqref{Transport} to show that there exist boundary conditions $f_1$ and $f_2$ for which the corresponding solutions $u_1$ and $u_2$ give rise to a vector field $X$ whose boundary-intersecting integral curves cover $\Omega$. However, the rapid exponential decay of CGOs can be difficult to work with in practice.  

Fortunately, it turns out that any non-trivial positive boundary conditions yield a pair of solutions $u_1,u_2$ whose corresponding vector field $X$ lets us recover the coefficients on a dense set in $\Om$. This follows from the argument given in the proof of Theorem 2.2 in \cite{BalRen2}, which actually shows that we can recover the coefficients almost everywhere; a version of this same argument is used to analyze the stability of the reconstruction in \cite{BonChoTri}.  This article presents an alternate proof for the density result by considering the flow on $\Om$ generated by $X$ and applying dynamical systems point of view. More precisely, we prove the following.  

\begin{theorem}\label{MainTheorem}
Suppose $f_1,f_2 \in C(\partial \Om)$ with $f_2$ positive and $f_1/f_2$ not constant.  Let $u_1$ and $u_2$ be the solutions to \eqref{Transport} with $u_1 = f_1$ and $u_2 = f_2$ on $\partial \Om$, and let $X$ be the vector field defined by \eqref{XDef}. Then the union of the integral curves of $X$ that intersect the boundary of $\Omega$ is dense in $\Omega$.  
\end{theorem}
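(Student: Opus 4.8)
The plan is to recognize $X$ as a time-rescaled gradient flow and then to run the qualitative theory of gradient systems on it. First I would introduce $\ph := u_1/u_2$, which is well defined and of class $C^2$ on $\Om$ because the hypothesis $f_2 > 0$ forces $u_2 > 0$ on $\overline{\Om}$ by the maximum principle (as quoted in the introduction); since $u_2$ is continuous on the compact set $\overline{\Om}$, this gives $u_2 \ge c_0 > 0$ for some constant $c_0$. A direct computation then yields
\[
\grad \ph = \frac{u_2 \grad u_1 - u_1 \grad u_2}{u_2^2} = \frac{X}{u_2^2},
\]
so that $X = u_2^2 \, \grad \ph$. Because $u_2^2$ is continuous and pinched between $c_0^2$ and a finite bound, the integral curves of $X$ coincide, as unparametrized curves, with the orbits of the gradient field $\grad \ph$, and a curve reaches $\partial \Om$ for $X$ in finite time exactly when the corresponding gradient orbit does. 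Finally, since $f_1/f_2 = \ph|_{\partial \Om}$ is not constant, $\ph$ is not constant on $\Om$.

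Next I would record the structural fact that $\ph$ is itself a solution of a homogeneous divergence-form elliptic equation. Regrouping the combination of the equations for $u_1$ and $u_2$ that produced \eqref{PureTransport} shows
\[
\grad \cdot \!\left( D u_2^2 \, \grad \ph \right) = \grad \cdot (D X) = 0 .
\]
From this I obtain three tools at no cost: the strong maximum principle, so $\ph$ has no interior local maximum or minimum and hence $\ph < \max_{\overline{\Om}} \ph$ strictly inside $\Om$; the Hopf lemma, so $\grad \ph$ is nonzero and transverse to $\partial \Om$ at any boundary point attaining $\max_{\overline{\Om}} \ph$ or $\min_{\overline{\Om}} \ph$; and unique continuation, so the critical set $\mathcal{C} := \{ \grad \ph = 0 \}$ has empty interior because $\ph$ is nonconstant.

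The dynamical core then proceeds by contradiction. Let $\mathcal{G} \subseteq \Om$ be the union of the boundary-intersecting orbits, and suppose $\mathcal{G}$ is not dense, so some open ball $B$ consists entirely of points whose full orbit remains in $\Om$. The flow-invariant hull $\Sigma := \bigcup_{t \in \R} \Phi_t(B)$ is then an open, flow-invariant subset of $\Om$ on which $\ph$ is a strict Lyapunov function. Since $\overline{\Om}$ is compact and $\ph$ is strictly monotone along every nonconstant orbit, the $\alpha$- and $\omega$-limit sets of each orbit in $\Sigma$ lie in $\mathcal{C} \cap \Om$. I would then examine $L := \sup_{\Sigma} \ph$: if this supremum were attained at an interior point of the open set $\Sigma$ it would be an interior local maximum of $\ph$, contradicting the strong maximum principle; and since $\partial \Sigma$ is itself flow-invariant with $\ph \le L$ on $\overline{\Sigma}$, any maximizer lying in $\partial \Sigma \cap \Om$ must be a critical point of $\ph$ (otherwise its forward orbit would immediately push $\ph$ above $L$ while staying in $\partial \Sigma$).

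The main obstacle is to turn this structural information into an actual contradiction, that is, to exclude an open trapped set $\Sigma$. The difficulty is entirely in the possible degeneracy of the limiting critical points: the coefficients are only $C^1$, so $\ph$ need not be Morse, and a degenerate interior critical point could a priori carry a large stable set for the ascent flow. The plan here is to exploit that an interior critical point $q$ is \emph{neither} a local maximum \emph{nor} a local minimum, so every neighborhood of $q$ contains points with $\ph > \ph(q)$; because the forward orbit of any $p \in \Sigma$ enters every neighborhood of its $\omega$-limit in $\mathcal{C}$ while keeping $\ph < L$, comparing with these nearby higher-value points should produce escaping orbits and contradict the invariance and maximality of $\Sigma$. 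To make the degenerate case fully rigorous I expect to invoke known regularity results for critical sets of solutions of elliptic equations (empty interior, and under mild regularity Hausdorff dimension at most $n-2$), which force the stable and unstable sets of $\mathcal{C} \cap \Om$ to be nowhere dense. This is the step I would treat most carefully; everything else is soft gradient-flow theory packaged with the maximum principle, and the conclusion is that $\mathcal{G} = \Om \setminus \mathcal{B}$ is dense.
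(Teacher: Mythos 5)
Your reduction to the gradient flow of $\ph = u_1/u_2$, the identity $X = u_2^2\grad\ph$, the divergence identity $\grad\cdot(Du_2^2\grad\ph)=0$, and the use of unique continuation to conclude that $\grad\ph$ does not vanish identically on the trapped set all match the paper's setup. But the dynamical core of your argument is a genuinely different route from the paper's, and it has a gap exactly at the step you flag yourself. Your plan is to show that every orbit in the trapped open set $\Sigma$ has its $\alpha$- and $\omega$-limit sets in the critical set $\mathcal{C}$, and then to derive a contradiction by showing the stable and unstable sets of $\mathcal{C}\cap\Om$ are nowhere dense. That last implication does not follow from anything you establish: $\mathcal{C}$ having empty interior (or even Hausdorff dimension at most $n-2$) does not control the size of its basin of attraction under the ascent flow when the critical points are degenerate, and for non-Morse functions such basins can be large. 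Moreover, the quantitative critical-set results you propose to invoke require more regularity than the paper assumes ($D\in C^1$, $\sigma\in C$, so $\ph$ need not even be $C^2$ up to what those theorems demand), and the argument sketched around $L=\sup_\Sigma\ph$ (interior maximizers are excluded, boundary-of-$\Sigma$ maximizers are critical) produces structural information but no contradiction. So as written the proof does not close.

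The paper avoids this difficulty entirely by exploiting the divergence-free structure dynamically rather than just as a source of a maximum principle: since $\grad\cdot(DX)=0$, the time-one map of the flow of $DX$ on $\Sigma_U$ preserves Lebesgue measure, so the Poincar\'e Recurrence Theorem applies. Choosing a small open set $W$ compactly contained in a region $V$ where $|\grad\ph|>c>0$ (such a region exists by unique continuation), recurrence gives an orbit returning to $W$ infinitely often, and along each return segment $\ph$ increases by a fixed amount $q>0$: either the segment stays in $V$ for time at least $1$, where $\grad\ph\cdot DX = Du_2^2|\grad\ph|^2$ is bounded below, or it must traverse $V\setminus W$ twice, contributing length at least $2\,\mathrm{dist}(W,\mathrm{ext}\,V)$ on which $|\grad\ph|>c$. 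This makes $\ph$ unbounded on $W$, contradicting continuity, with no need to analyze limit sets or degenerate critical points at all. If you want to salvage your approach, the missing ingredient is precisely this measure-preservation argument (or some substitute for it); the Lyapunov-function observations alone do not exclude an open trapped set.
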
 

In other words, given continuous, positive, linearly independent boundary conditions, we can get arbitrarily close to any point in $\Omega$ from the boundary by following an integral curve of $X$. It follows that the transport method allows us to recover $D$ and $\sigma$ on a dense set without special care in selecting the boundary conditions $f_1$ and $f_2$. Note that if $D$ and $\sigma$ are a priori continuous, then we can recover $D$ and $\sigma$ on all of $\Omega$ by continuity. 

\section{Proof of Theorem \ref{MainTheorem}}

To begin, we will fix some notation. Let $u_1$, $u_2$, and $X$ be as in the statement of Theorem \ref{MainTheorem}, and make the following definitions.

\begin{defn}\label{Equivalence}
Let $x, y \in \bar{\Omega}$.  We say that $x \sim y$ if there exists an integral curve $\gamma:[0,b] \rightarrow \bar{\Omega}$ defined by $\dot{\gamma}(t) = X(\gamma(t))$ such that both $x$ and $y$ lie in the image of $\gamma$. 
\end{defn}

\begin{defn}\label{Sigma}
For a set $A \subset \bar{\Omega}$, define 
\[
\Sigma_A = \{ y \in \bar{\Omega} | y \sim x \mbox{ for some } x \in A\}.
\]
In other words, $\Sigma_A$ is the union of all integral curves of $X$ that intersect $A$.  
\end{defn}

With this notation, the statement of Theorem \ref{MainTheorem} is that the closure of $\Sigma_{\partial \Om}$ is the same as the closure of $\Om$; i.e.
\[
\overline{\Sigma_{\partial \Om}} = \bar{\Om}.
\]

Before beginning the proof of Theorem \ref{MainTheorem}, we make the following remark: since $D$ is uniformly positive, we can replace $X$ by $DX$ in Definition \ref{Equivalence}. In other words, the following definition is equivalent to Definition \ref{Equivalence}.  

\begin{defn}\label{EquivalentEquivalence}
Let $x, y \in \bar{\Omega}$.  We say that $x \sim y$ if there exists an integral curve $\gamma:[0,b] \rightarrow \bar{\Omega}$ defined by $\dot{\gamma}(t) = DX(\gamma(t))$ such that both $x$ and $y$ lie in the image of $\gamma$. 
\end{defn}

Indeed, if we have an integral curve $\gamma:[0,b] \rightarrow \bar{\Omega}$ defined by the equation $\dot{\gamma}(t) =  X(\gamma(t))$, we can define a function $g$ by the ODE
\[
\dot{g}(t) = D(\gamma(g(t))) \quad \mbox{ and } \quad g(0) = 0.
\]
Since $D$ is uniformly positive, $g$ is increasing, so there exists $b'$ such that $g(b') = b$.  Then we can define a new curve $\tilde{\gamma}:[0,b'] \rightarrow \bar{\Omega}$ by reparametrizing $\gamma$ with $g$:
\[
\tilde{\gamma}(t) = \gamma(g(t)).
\]
Now $\tilde{\gamma}([0,b']) = \gamma([0,b])$ and
\[
\dot{\tilde{\gamma}}(t) = DX(\tilde{\gamma}(t)).
\]
Therefore if $x \sim y$ according to Definition \ref{Equivalence} then $x \sim y$ according to Definition \ref{EquivalentEquivalence}, and the converse follows similarly.  With this in mind, we turn to the proof of Theorem \ref{MainTheorem}.

\begin{proof}[Proof of Theorem \ref{MainTheorem}]
Suppose that $\Omega \setminus \Sigma_{\partial \Omega}$ contains an open set $U$.  Then no integral curve of the vector field $DX$ joins any point of $U$ to $\partial \Om$, so it follows that $\Sigma_U$ is disjoint from $\partial \Omega$, and therefore $\Sigma_U \subset \Omega$.

Now the vector field $DX$ gives a flow on $\Sigma_U$, defined for all time, that maps $\Sigma_U$ to itself.  Moreover, 
\[
\grad \cdot D X = \grad \cdot D (u_2 \grad u_1 - u_1 \grad u_2)  = 0,
\]
so the vector field $DX$ is divergence free.  This means that the flow of $DX$ preserves volume, so the Poincar\'{e} Recurrence Theorem applies to maps defined by this flow.  This gives us the following result, (see e.g. \cite{Arn}, p71-72): 

\begin{prop}[Poincar\'e Recurrence Theorem] 
Let $W \subset \Sigma_U$ be open.  For $x \in W$ and $k \in \N$, define
\[
x_k = \gamma_x(k),
\]
where $\gamma_x$ is the integral curve defined by  $\dot{\gamma}_x(t) = DX(\gamma_x(t))$, with the initial condition $\gamma_x(0) = x$.  Then for almost every $x \in W$, $x_k \in W$ for infinitely many $k$.  
\end{prop}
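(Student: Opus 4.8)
The plan is to recognize the stated proposition as the classical Poincar\'e Recurrence Theorem applied to the time-one map of the flow, and to reproduce its standard measure-theoretic proof. First I would let $\ph_t$ denote the flow of $DX$ on $\Sigma_U$, which by the discussion preceding the proposition is defined for all $t \in \R$ and maps $\Sigma_U$ bijectively onto itself, and set $T = \ph_1$. Since $\Sigma_U \subset \Om$ is bounded, it has finite Lebesgue measure $\mu$; and since $DX$ is divergence free, Liouville's theorem (see \cite{Arn}) shows that $T$ preserves $\mu$. With these reductions in hand, $x_k = \gamma_x(k) = T^k(x)$, so the desired conclusion is exactly that $T^k x \in W$ for infinitely many $k$, for a.e.\ $x \in W$.

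The core of the argument is to show that the set $B = \{x \in W : T^k x \notin W \text{ for all } k \geq 1\}$ of points that never return to $W$ is null. The key observation is that the preimages $B, T^{-1}B, T^{-2}B, \dots$ are pairwise disjoint: if a point $x$ lay in both $T^{-m}B$ and $T^{-n}B$ with $m < n$, then $T^m x \in B$ would return to $W$ at time $n-m \geq 1$, contradicting the definition of $B$. Because $T$ preserves $\mu$, each $T^{-n}B$ has measure $\mu(B)$; since these infinitely many disjoint sets all sit inside the finite-measure space $\Sigma_U$, their measures sum to at most $\mu(\Sigma_U) < \infty$, which forces $\mu(B) = 0$.

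To upgrade ``returns at least once'' to ``returns infinitely often,'' I would observe that any point $x \in W$ with only finitely many returns has a largest index $m \geq 0$ with $T^m x \in W$, and then $T^j(T^m x) \notin W$ for all $j \geq 1$, so $T^m x \in B$. Hence the set of points in $W$ returning only finitely often is contained in $\bigcup_{m \geq 0} T^{-m} B$, a countable union of null sets, and so is itself null. Its complement in $W$, the set of points returning infinitely often, therefore has full measure in $W$, which is precisely the claim.

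I expect the only genuine subtlety to be bookkeeping at the level of the measure space rather than a deep obstacle: one must confirm that $T$ is a well-defined, invertible, $\mu$-preserving self-map of a set of finite measure. The boundedness of $\Om$, and hence of $\Sigma_U$, supplies finiteness; the global-in-time existence of the flow on $\Sigma_U$, already established above, supplies well-definedness and invertibility via $T^{-1} = \ph_{-1}$; and the computation $\grad \cdot DX = 0$ supplies measure-preservation. Once these three facts are recorded, the combinatorial disjointness argument runs without further difficulty.
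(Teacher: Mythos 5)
Your proof is correct, but it is worth noting that the paper does not actually prove this proposition at all: it verifies the hypotheses (the flow of $DX$ is globally defined on $\Sigma_U$ and volume-preserving because $\grad \cdot DX = 0$) and then simply cites Arnold, pp.\ 71--72, for the recurrence statement. What you have written is the standard textbook proof of that cited result --- take $T = \ph_1$, show the non-returning set $B$ has disjoint preimages $T^{-n}B$ of equal measure inside the finite-measure set $\Sigma_U$, conclude $\mu(B)=0$, and then absorb the ``finitely many returns'' set into the countable union $\bigcup_{m\ge 0} T^{-m}B$ --- and every step is sound. The only hypotheses you lean on are exactly the ones the paper supplies just before the proposition: that the flow is defined for all time on $\Sigma_U$ and maps it to itself (so $T$ is an invertible self-map), that $\Sigma_U$ is bounded and measurable (it is open, being a union of images $\ph_t(U)$ of the open set $U$ under homeomorphisms, and is contained in the bounded set $\Om$), and that divergence-freeness gives measure preservation via Liouville's theorem. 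So your write-up makes explicit, in a self-contained way, the argument the authors delegate to a reference; it buys completeness at the cost of a few lines, while the paper's citation keeps the focus on the novel part of the argument, namely how recurrence is combined with the identity $X = u_2^2\grad u$ to rule out the open set $U$.
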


The basic idea of the proof of Theorem \ref{MainTheorem} is as follows. A short calculation shows that 
\begin{equation}\label{XIdentity}
X = u_2^2 \grad u,
\end{equation}
where $u = u_1/u_2$. The maximum principle, together with the positivity of $f_2$, guarantees that $u_2$ is uniformly positive, so $u$ is well defined. Moreover the integral curves of $X$ and $DX$ are the same as the integral curves of $\grad u$, by the same logic used in the discussion of Definition \ref{EquivalentEquivalence}.  If any integral curve of $X$ were closed, we could integrate $\grad u$ along that curve and obtain two different values of $u$, which would be a contradiction.  The main idea of the proof is to apply the Poincar\'e Recurrence Theorem to a well chosen subset $W \subset \Sigma_U$, to provide us with a trajectory that approximates a closed curve well enough to force a contradiction.  

To obtain this subset $W$, define $u = u_1/u_2$.   Since $u$ is not constant at the boundary, unique continuation guarantees that $u$ is not constant on $\Sigma_U$.  Therefore there exists some point $y$ in $\Sigma_U$ such that 
\[
\left| \grad u(y) \right|  > 0.
\]
Then the regularity of $u_1$ and $u_2$ guarantees that there exists an open set $V \subset \Sigma_U$ containing $y$ and a positive constant $c$ such that $|\grad u| > c$ on $V$.  

Now consider an open set $W$ which contains $y$ and is compactly contained in $V$. Applying the Poincar\'e Recurrence Theorem to $W$, we see that there exists $x_0 \in W$ such that $x_k \in W$ for infinitely many $k$.  

Let $\{ x_{k_j}\}$ denote the subsequence of $\{x_k\}$ such that $x_{k_j} \in W$, and let $\gamma^j:[k_j,k_{j+1}] \rightarrow \Omega$ be the integral curve of $DX$ joining $x_{k_j}$ to $x_{k_{j+1}}$.  We can obtain $u(x_{k_{j+1}})$ from $u(x_{k_j})$ by integrating $\grad u$ over $\gamma^j$; in other words
\begin{equation}\label{LineIntegral}
u(x_{k_{j+1}}) - u(x_{k_j}) = \int_{\gamma_j} \grad u \cdot dr. 
\end{equation}
For each $j$, one of the following two things must happen:

\begin{itemize}

\item
Case I: the image of $\gamma^j$ is entirely contained in $V$, or 

\item
Case II: the image of $\gamma^j$ contains points outside $V$.  

\end{itemize}

In Case I, we can parametrize \eqref{LineIntegral} to get
\begin{eqnarray*}
u(x_{k_{j+1}}) - u(x_{k_j}) &=& \int_{k_j}^{k_{j+1}} \grad u(\gamma^j(t)) \cdot \dot{\gamma^j}(t) \, dt \\ 
			     &=& \int_{k_j}^{k_{j+1}} \grad u(\gamma^j(t)) \cdot DX(\gamma^j(t)) \, dt. \\ 
\end{eqnarray*}
Then \eqref{XIdentity} implies that 
\[
u(x_{k_{j+1}}) - u(x_{k_j})  = \int_{k_j}^{k_{j+1}} Du_2^2(\gamma^j(t))| \grad u(\gamma^j(t))|^2 \, dt. 
\]
Since the image of $\gamma^j$ is entirely contained in $V$, and $k_{j+1} - k_j \geq 1$, we have 
\[
u(x_{k_{j+1}}) - u(x_{k_j}) \geq \min_{\Omega} D u_2^2 \cdot c^2 > 0.
\]

In Case II, the length of the portion of $\gamma^j$ contained in $V$ must be at least twice the distance from $W$ to the exterior of $V$, so \eqref{LineIntegral} tells us that 
\[
u(x_{k_{j+1}}) - u(x_{k_j}) \geq 2c\, \mathrm{dist}(W,\mathrm{ext}\, V)>0.
\]

In both cases, $u(x_{k_{j+1}}) - u(x_{k_j})$ is bounded below uniformly in $j$. By setting $q$ to be the minimum of the bounds in both cases, 
we see that $u(x_{k_{j+1}}) - u(x_{k_j}) \geq q$ for each $j \in \N$, and therefore $u$ is unbounded in $W$.  But this contradicts the continuity of $u$, which is guaranteed by the continuity and positivity of $u_1$ and $u_2$, and so our initial supposition is false.  Therefore
$\overline{\Sigma_{\partial \Omega}} = \bar{\Om}$ as claimed.  

\end{proof}

As a final remark, note that if $\sigma \equiv 0$, we can take $u_2$ to be the identity function.  Then \eqref{XIdentity} implies that $X = \grad u_1$, and Theorem \ref{MainTheorem} gives us a neat corollary:

\begin{cor}
Suppose $u \in C^2(\Om) \cap C^1(\bar{\Om})$, and
\[
\grad \cdot D \grad u = 0
\]
in $\Omega$.  Then the set of integral curves of $\grad u$ that intersect the boundary of $\Omega$ is dense in $\Omega$.  
\end{cor}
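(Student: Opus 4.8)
The plan is to deduce the corollary directly from Theorem \ref{MainTheorem} by making the special choice of second solution suggested in the preceding remark. Since $\sigma \equiv 0$, the constant function $u_2 \equiv 1$ solves \eqref{Transport}, because $D \Lap 1 + \grad D \cdot \grad 1 - \sigma \cdot 1 = 0$. I would therefore set $u_1 = u$ and $u_2 \equiv 1$ and verify the hypotheses of Theorem \ref{MainTheorem} for this pair. The boundary value $f_2 = u_2|_{\partial \Om} \equiv 1$ is positive and $u_2 \in C(\partial \Om)$, while $f_1 = u|_{\partial \Om} \in C(\partial \Om)$ by the assumed $C^1(\bar{\Om})$ regularity of $u$; so the only nontrivial hypothesis remaining is that $f_1/f_2 = u|_{\partial \Om}$ is not constant.

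To handle the non-constancy I would first dispose of the degenerate case. If $u$ is constant on $\Om$, then $\grad u \equiv 0$, the integral curves of $\grad u$ are single points, and the statement must be read as excluding this case; I therefore assume $u$ is non-constant. I would then deduce that $u|_{\partial \Om}$ is non-constant as well. The cleanest route is the maximum principle: since $\sigma \equiv 0$, the operator $\grad \cdot D \grad$ is uniformly elliptic with no zeroth-order term, so a solution $u$ satisfies $\max_{\bar{\Om}} u = \max_{\partial \Om} u$ and $\min_{\bar{\Om}} u = \min_{\partial \Om} u$. For non-constant $u$ we have $\max_{\bar{\Om}} u > \min_{\bar{\Om}} u$, hence $u$ takes at least two distinct values on $\partial \Om$ and $f_1/f_2$ is not constant. (Alternatively, one may invoke uniqueness for the Dirichlet problem: constant boundary data would force $u$ to equal that constant, contradicting non-constancy.)

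With both hypotheses in place, Theorem \ref{MainTheorem} applies. It only remains to identify the vector field: with $u_2 \equiv 1$, definition \eqref{XDef} gives
\[
X = u_2 \grad u_1 - u_1 \grad u_2 = \grad u ,
\]
in agreement with \eqref{XIdentity} since here $u_1/u_2 = u$. The conclusion of the theorem, namely that the union of the integral curves of $X$ meeting $\partial \Om$ is dense in $\Om$, is then precisely the assertion that the integral curves of $\grad u$ intersecting $\partial \Om$ are dense, which is what we want.

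Because this corollary is a direct specialization of Theorem \ref{MainTheorem}, I do not expect any genuine obstacle. The only point requiring a moment's care is the translation of ``$u$ non-constant in $\Om$'' into ``$u|_{\partial \Om}$ non-constant,'' and this is settled immediately by the maximum principle available in the $\sigma \equiv 0$ case.
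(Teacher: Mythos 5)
Your proposal is correct and follows exactly the route the paper intends: take $u_2 \equiv 1$ (which solves the equation since $\sigma \equiv 0$), note that \eqref{XDef} then reduces to $X = \grad u$, and invoke Theorem \ref{MainTheorem}. Your additional care in ruling out the degenerate constant case and in converting non-constancy of $u$ into non-constancy of $u|_{\partial \Om}$ via the maximum principle is a welcome tightening of a hypothesis the paper leaves implicit, but it is the same argument.
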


\end{document}